\documentclass[submission]{FPSAC2018}
\usepackage{verbatim}
\usepackage{float}

%############################
%### Extra Comment Macros###
%############################

\usepackage{times}
\usepackage[T1]{fontenc}
\usepackage{mathrsfs}
\usepackage{epsfig}
\usepackage{color}
\usepackage{array}
\usepackage{enumitem}

\newcolumntype{L}{>{$}l<{$}}
\newcolumntype{R}{>{$}r<{$}}

%############################
%### Theorem Environments ###
%############################

\newtheorem{thm}{Theorem}[section]

\newtheorem{cor}[thm]{Corollary}

\theoremstyle{definition}

\newtheorem{example}[thm]{Example}

\numberwithin{equation}{section}

%######################
%### standard stuff ###
%######################
\def\ZZ{\mathbb{Z}}
\def\QQ{\mathbb{Q}}

\def\FF{\mathbb{F}}

\def\CC{\mathbb{C}}
\def\RR{\mathbb{R}}

\def\Conf{\mathrm{Conf}}

\newcommand\PConf{\mathrm{PConf}}

\def\sgn{\mathrm{sgn}}
\def\multi#1#2{\ensuremath{\left(\kern-.3em\left(\genfrac{}{}{0pt}{}{#1}{#2}\right)\kern-.3em\right)}}

\newcommand{\poly}{\mathrm{Poly}}
\newcommand{\sfr}{\mathrm{sf}}

\newcommand{\Sp}{\mathcal{S}}
\newcommand{\tr}{\mathbf{1}}
\newcommand{\Sgn}{\mathbf{Sgn}}
\newcommand{\Std}{\mathbf{Std}}

\keywords{arithmetic statistics, symmetric group representations, configuration space}
\received{\today}

\usepackage[backend=bibtex]{biblatex}
\addbibresource{tghyde_bib.bib}

\abstract{Factorization statistics are functions defined on the set $\poly_d(\FF_q)$ of all monic degree $d$ polynomials with coefficients in $\FF_q$ which only depend on the degrees of the irreducible factors of a polynomial. We show that the expected values of factorization statistics are determined by the representation theoretic structure of the cohomology of point configurations in $\RR^3$. This \emph{twisted Grothendieck-Lefschetz formula for $\poly_d$} is analogous to a result of Church, Ellenberg, and Farb for \emph{squarefree} polynomials. Our proof uses formal power series methods which also lead to a new proof of the Church, Ellenberg, and Farb result circumventing algebraic geometry.}

\begin{document}

\title[Factorization Statistics]{Factorization Statistics and the\\ Twisted Grothendieck-Lefschetz formula}

\author{Trevor Hyde\thanks{\href{mailto:tghyde@umich.edu}{tghyde@umich.edu}}}

\address{Dept. of Mathematics\\
University of Michigan \\
Ann Arbor, MI 48109-1043\\
}

\maketitle

%%%Reviewer comment: They want some indication of proofs. Look for things that may be cut and find a way to include something.

\section{Introduction}
What is the probability that a random integer $m$ in the interval $[1,n]$ is prime? The Prime Number Theorem tells us that
\[
    \mathrm{Prob}(m \in [1,n]\text{ is prime}) \approx \frac{1}{\log(n)}
\]
for sufficiently large $n$. Following a classic analogy between $\ZZ$ and $\FF_q[x]$, we ask: what is the  probability that a random monic degree $d$ polynomial $f(x) \in \FF_q[x]$ is irreducible? One can show that
\begin{equation}
\label{eqn 1}
    \mathrm{Prob}(f(x)\text{ monic degree $d$ is irreducible}) \approx \frac{1}{d}
\end{equation}
for large values of $q$. Note that the number of monic degree $d$ polynomials in $\FF_q[x]$ is $q^d$, hence $\frac{1}{d} = \frac{1}{\log_q (q^d)}$ parallels the result for $\ZZ$. This is the beginning of a motivating theme: analogous arithmetic statistical questions for $\ZZ$ and $\FF_q[x]$ have analogous answers. 

Often we may determine the exact values of statistics on the $\FF_q[x]$ side of the analogy which seem out of reach for $\ZZ$.  For example, the number of irreducible monic degree $d$ polynomials in $\FF_q[x]$ is given by $d$th \emph{necklace polynomial},
\[
    \#\{f(x) \text{ irreducible monic degree $d$}\} = \frac{1}{d}\sum_{e\mid d}\mu(d/e)q^e,
\]
where $\mu$ is the M\"{o}bius function. Therefore,
\begin{equation}
\label{eqn irred prob} 
    \mathrm{Prob}(f(x)\text{ monic degree $d$ is irreducible}) = \frac{1}{d}\sum_{e\mid d}\frac{\mu(d/e)}{q^{d-e}}.
\end{equation}

From an analytic point of view there is an impulse to focus on the leading terms for a probability like \eqref{eqn 1}. However, on closer inspection of the precise formula \eqref{eqn irred prob}, we see that each term has a structural interpretation. Consider the case of \eqref{eqn irred prob} when $d = 6$,
\[
    \mathrm{Prob}(f(x)\text{ monic degree $6$ is irreducible}) = \frac{1}{6}\bigg(1 - \frac{1}{q^3} - \frac{1}{q^4} + \frac{1}{q^5}\bigg).
\]
The four terms in this expression correspond to the intermediate fields of the degree 6 extension $\FF_{q^6}/\FF_q$ and the coefficients encode how these fields fit together.

This brings us to our main thesis: the exact expressions for arithmetic statistical questions in $\FF_q[x]$ reflect hidden structure which is not apparent from approximations. In other words, \emph{there are no error terms}, each term has an interpretation and together they tell a complete story. Our main result (Theorem \ref{thm twist intro}) supports this claim for the expected values of functions on $\FF_q[x]$.

A \emph{factorization statistic} $P$ is a function defined on the set $\poly_d(\FF_q)$ of monic degree $d$ polynomials in $\FF_q[x]$ such that $P(f)$ depends only on the partition of $d$ given by the degrees of the irreducible factors of $f$.
$P$ may also be viewed as a function defined on partitions of $d$, or as a class function of the symmetric group $S_d$. Let $\psi_d^k$ be the character of the $S_d$-representation $H^{2k}(\PConf_d(\RR^3),\QQ)$ where $\PConf_d(\RR^3)$ is the ordered configuration space of $d$ distinct points in $\RR^3$ (see Section \ref{section twist}.) 

\begin{thm}[Twisted Grothendieck-Lefschetz for $\poly_d$]
\label{thm twist intro}
If $P$ is a factorization statistic, then the expected value $E_d(P)$ of $P$ on $\poly_d(\FF_q)$ is given by
\[
    E_d(P) := \frac{1}{q^d}\sum_{f\in \poly_d(\FF_q)}P(f) = \sum_{k=0}^{d-1}\frac{\langle P, \psi_d^k\rangle}{q^k},
\]
where $\langle P, \psi_d^k\rangle = \frac{1}{d!}\sum_{\sigma\in S_d} P(\sigma)\psi_d^k(\sigma)$ is the standard inner product of class functions of the symmetric group $S_d$.
\end{thm}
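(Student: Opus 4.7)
My plan is to reduce the identity, by linearity in $P$, to a family of per-cycle-type identities, and then prove each by factoring both sides into a product over cycle lengths via formal power series manipulations. Since both sides of the claimed formula are linear in $P$, it suffices to verify it for $P$ ranging over the indicator functions of each cycle type $\lambda \vdash d$. For $\lambda = 1^{m_1}2^{m_2}\cdots$, this reduces the theorem to the identities
\[
\frac{\#\{f \in \poly_d(\FF_q) : \mathrm{type}(f) = \lambda\}}{q^d} = \sum_{k=0}^{d-1} \frac{\psi_d^k(\lambda)}{z_\lambda\,q^k},
\]
where $z_\lambda = \prod_j j^{m_j}m_j!$ is the order of the centralizer of a permutation of cycle type $\lambda$.

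For the arithmetic side, I would use the necklace polynomial $N_j(q) = \frac{1}{j}\sum_{e \mid j}\mu(j/e)q^e$ counting monic irreducibles of degree $j$. A monic polynomial of type $\lambda$ is determined by an unordered multiset of $m_j$ monic irreducibles of degree $j$ for each $j$, so
\[
\#\{f \in \poly_d(\FF_q) : \mathrm{type}(f) = \lambda\} = \prod_{j \geq 1} \multi{N_j(q)}{m_j}.
\]
Dividing by $q^d = \prod_j q^{jm_j}$ preserves the product structure, which is parallel to the classical Euler product $\sum_d q^d t^d = (1-qt)^{-1} = \prod_j (1-t^j)^{-N_j(q)}$.

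The main obstacle is establishing a matching product-over-cycles formula for the graded character $\sum_k \psi_d^k(\lambda) t^k$ of $H^*(\PConf_d(\RR^3),\QQ)$. I would package the characters into a cycle index series
\[
Z(t,u) = \sum_{d \geq 0} \sum_{\lambda \vdash d} \frac{1}{z_\lambda}\bigl(\sum_{k} \psi_d^k(\lambda) t^k\bigr) p_\lambda u^d
\]
and aim to show, by formal power series manipulations, that $Z(t,u)$ factors as $\prod_{j \geq 1} G_j(p_j, t, u)$ for explicit factors $G_j$. The Arnold/Orlik-Solomon presentation of $H^*(\PConf_d(\RR^3),\QQ)$ -- generated in degree $2$ with quadratic relations -- provides one natural path to such a factorization, allowing the proof to circumvent algebraic geometry as the paper promises. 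Once the factorization is in hand, setting $t = 1/q$ and comparing the $j$-th factor on each side reduces the theorem to a single per-$j$ identity between $G_j(p_j, 1/q, u)$ and the Euler factor $(1-u^j)^{-N_j(q)}$, which I expect to verify by direct manipulation using M\"{o}bius inversion in the definition of $N_j(q)$.
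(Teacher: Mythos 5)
Your proposal follows essentially the same route as the paper: the per-cycle-type identity you obtain by linearity is exactly the splitting-measure formula $\nu(\lambda) = \tfrac{1}{z_\lambda}\sum_{k=0}^{d-1}\psi_d^k(\lambda)q^{-k}$ of Theorem \ref{thm split}, which the paper likewise combines with an interchange of summation, and which is itself established (in the cited reference) by comparing the Euler product for the necklace-polynomial counts against the Hersh--Reiner product formula for the cycle index series of $H^\ast(\PConf_d(\RR^3),\QQ)$ --- precisely the factorization you propose. The only caveat is that this cycle-index product formula is the real content of the argument and you leave it as an aim rather than a proof, but that matches the level of detail of the paper, which also defers it to \cite{HershReiner} and \cite{Hyde1}.
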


The twisted Grothendieck-Lefschetz formula provides a bridge between representation theory and topology on the one hand and the combinatorics of finite fields on the other. We explore this interplay through examples in Section \ref{section example}.

\section{Factorization statistics}
\label{section fact}
The \emph{factorization type} of a polynomial $f(x) \in \FF_q[x]$ is the partition of $\deg(f)$ given by the degrees of the irreducible factors of $f(x)$. Let $\poly_d(\FF_q)$ denote the set of monic degree $d$ polynomials in $\FF_q[x]$. A \emph{factorization statistic} $P$ is a function defined on polynomials $f(x) \in \poly_d(\FF_q)$ which only depends on the factorization type of $f(x)$.

\begin{example}\hspace{.1in}

\label{ex 1}
\begin{enumerate}[leftmargin=*]
    \item Consider the polynomials $g(x), h(x) \in \poly_5(\FF_3)$ with irreducible factorizations
    \[
        g(x) = x^2(x+1)(x^2 + 1) \hspace{3em} h(x) = (x+1)(x - 1)(x^3 - x + 1).
    \]
    The factorization type of $g(x)$ is the partition $[2,1,1,1]$ and the factorization type of $h(x)$ is $[3,1,1]$. Note that the factorization type does not record the multiplicity of a specific factor so that $x^2$ and $x(x + 1)$ both have the same factorization type $[1,1]$.
    
    \item Let $R(f)$ be the number of $\FF_q$-roots of $f(x)\in \poly_d(\FF_q)$. Then $R(f)$ depends only on the number of linear factors of $f(x)$, hence is a factorization statistic. Referring to the two polynomials above we have $R(g) = 3$ and $R(h) = 2$.
    
    \item Say a polynomial $f(x)$ has \emph{even type} if the factorization type of $f(x)$ is an even partition. More specifically, say $\lambda = (1^{m_1} 2^{m_2} 3^{m_3}\cdots )$ is the factorization type of $f(x)$ and define $\sgn(\lambda)$ by
    \[
        \sgn(\lambda) = \prod_{j\geq 1} (-1)^{m_j(j - 1)},
    \]
    then $f(x)$ has even type if $\sgn(\lambda) = 1$. The function $ET$ defined by
    \[
        ET(f) = \begin{cases} 1 & \text{$f(x)$ has even type}\\ 0 & \text{otherwise,}\end{cases}
    \]
    is a factorization statistic. Continuing our examples, $ET(g) = 0$ and $ET(h) = 1$.
    
    \item Define the \emph{quadratic excess} $Q(f)$ of a polynomial $f$ to be
    \[
        Q(f) = \#\{\text{reducible quadratic factors of $f(x)$}\} - \#\{\text{irreducible quadratic factors of $f(x)$}\}.
    \]
    Then $Q(f)$ depends only on the number of linear and irreducible quadratic factors of $f(x)$, hence is a factorization statistic. Since $g(x)$ has 3 linear factors and 1 irreducible quadratic factor, we have
    \[
        Q(g) = \binom{3}{2} - 1 = 2.
    \]
    The polynomial $h(x)$ has 2 linear factors and 0 irreducible quadratic factors, hence
    \[
        Q(h) = \binom{2}{2} - 0 = 1.
    \]
\end{enumerate}

\end{example}

Let $E_d(P)$ denote the expected value of a factorization statistic $P$ on the set $\poly_d(\FF_q)$ of all monic degree $d$ polynomials. More precisely,
\[
    E_d(P) := \frac{1}{q^d}\sum_{f\in \poly_d(\FF_q)}P(f).
\]
By counting the number of polynomials with a given factorization type (e.g. using unique factorization and necklace polynomials) we can explicitly compute $E_d(P)$ for any particular $P$ and $d$ as a function of $q$. For example, here are some computations of $E_d(Q)$ where $Q$ is the quadratic excess statistic defined in Example \ref{ex 1} (4).
\begin{center}
\begin{tabular}{c|l}
    $d$ & $E_d(Q)$\\
\hline
    $3$ & $\tfrac{2}{q} + \tfrac{1}{q^2}$ \\
    $4$ & $\tfrac{2}{q} + \tfrac{2}{q^2} + \tfrac{2}{q^3}$\\
    $5$ & $\tfrac{2}{q} + \tfrac{2}{q^2} + \tfrac{4}{q^3} + \tfrac{2}{q^4}$\\
    $6$ & $\tfrac{2}{q} + \tfrac{2}{q^2} + \tfrac{4}{q^3} + \tfrac{4}{q^4} + \tfrac{3}{q^5}$\\
    $10$ & $\tfrac{2}{q} + \tfrac{2}{q^2} + \tfrac{4}{q^3} + \tfrac{4}{q^4} + \tfrac{6}{q^5} + \tfrac{6}{q^6} + \tfrac{8}{q^7} + \tfrac{8}{q^8} + \tfrac{5}{q^9}$\\
\end{tabular}
\end{center}

\noindent There are some remarkable features of these expected values: $E_d(Q)$ is a polynomial in $\frac{1}{q}$ of degree $d - 1$ with \emph{positive integer coefficients}---one should expect the coefficients to be rational numbers, but both the positivity and integrality are not a priori evident. Evaluating the polynomial $E_d(Q)$ at $q = 1$ gives the binomial coefficient $\binom{d}{2}$. The coefficients of $E_d(Q)$ appear to stabilize as $d$ increases with a clear pattern emerging already for $d = 10$, suggesting that the expected values $E_d(Q)$ converge coefficientwise as $d\rightarrow\infty$. We return to this example in Section \ref{sec quad} to explain these observations in the light of our results.

\section{Twisted Grothendieck-Lefschetz formulas}
\label{section twist}
We briefly detour from our discussion of factorization statistics and finite fields to review some topology. If $X$ is a topological space, then $\PConf_d(X)$ is
\[
    \PConf_d(X) := \{(x_1, x_2, \ldots, x_d) \in X^d : x_i \neq x_j\},
\]
the \emph{ordered configuration space of $d$ points on $X$}. The symmetric group $S_d$ acts on $\PConf_d(X)$ by permuting coordinates; this action is free given that all coordinates are distinct. Let $\Conf_d(X)$ be the quotient of $\PConf_d(X)$ by this action. $\Conf_d(X)$ is the space of \emph{unordered configurations of $d$ points on $X$}. Note that for each $k\geq 0$ the singular cohomology $H^k(\PConf_d(X),\QQ)$ is, by functoriality, a finite dimensional $S_d$-representation.

Our main result establishes a surprising connection between the expected values of factorization statistics on $\poly_d(\FF_q)$ and the sequence of $S_d$-representations $H^{2k}(\PConf_d(\RR^3),\QQ)$. The cohomology $H^\ast(\PConf_d(\RR^3),\QQ)$ is supported in even degrees, vanishing beyond degree $2(d-1)$.

\begin{thm}[Twisted Grothendieck-Lefschetz formula for $\poly_d$]
\label{twisted gl}
Let $P$ be a factorization statistic and let $\psi_d^k$ be the character of the $S_d$-representation $H^{2k}(\PConf_d(\RR^3),\QQ)$. Then the expected value $E_d(P)$ of $P$ on the set $\poly_d(\FF_q)$ of polynomials is given by
\[
	E_d(P) := \frac{1}{q^d}\sum_{f\in \poly_d(\FF_q)}P(f) = \sum_{k=0}^{d-1} \frac{\langle P, \psi_d^k\rangle}{q^k},
\]
where $\langle P, \psi_d^k\rangle = \frac{1}{d!}\sum_{\sigma\in S_d} P(\sigma)\psi_d^k(\sigma)$ is the standard inner product of $S_d$-class functions.
\end{thm}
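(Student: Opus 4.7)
The plan is to reduce the theorem to a single per-partition identity, express both sides as cycle-index generating functions, and match them as symmetric-function Euler products.

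\textbf{Step 1 (Reduction).} Both sides of the theorem are linear functionals in $P$. Taking $P$ to be the indicator of a conjugacy class $\lambda \vdash d$ and unpacking the inner product $\langle P, \psi_d^k\rangle = \sum_\lambda P(\lambda)\psi_d^k(\lambda)/z_\lambda$ reduces the theorem to the per-partition identity
\[
z_\lambda N_\lambda(q) \;=\; \sum_{k=0}^{d-1} \psi_d^k(\lambda)\, q^{d-k} \qquad (\lambda \vdash d),
\]
where $N_\lambda(q)$ is the number of monic polynomials in $\FF_q[x]$ of factorization type $\lambda$ and $z_\lambda = \prod_j j^{m_j} m_j!$ is the centralizer order. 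Passing to Frobenius characteristics and summing over all $d$, this is equivalent to a single symmetric-function identity
\[
\sum_{d \geq 0} u^d\, \ch\!\left(\sum_{k} \psi_d^k\, q^{d-k}\right) \;=\; \sum_{d \geq 0} u^d\!\!\sum_{f \in \poly_d(\FF_q)}\!\! p_{c(f)},
\]
where $p_{c(f)} = \prod_j p_j^{m_j(f)}$ records the factorization type of $f$.

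\textbf{Step 2 (Arithmetic side).} By unique factorization in $\FF_q[x]$, a monic polynomial is freely determined by a multiset of monic irreducibles; summing geometric series factor by factor yields the cycle-index Euler product
\[
\sum_{d \geq 0} u^d\!\!\sum_{f \in \poly_d(\FF_q)}\!\! p_{c(f)} \;=\; \prod_{j \geq 1} (1 - p_j u^j)^{-I_j(q)},
\]
where $I_j(q) = \frac{1}{j}\sum_{e \mid j} \mu(j/e)\, q^e$ is the necklace count of monic irreducibles of degree $j$.

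\textbf{Step 3 (Topological side) and the main obstacle.} The remaining task is to compute $\sum_{d \geq 0} u^d\, \ch(\sum_k \psi_d^k\, q^{d-k})$ and show that it coincides with the same Euler product. The ring $H^\ast(\PConf_d(\RR^3),\QQ)$ has an Orlik--Solomon-style presentation on degree-$2$ generators $e_{ij}$ modulo the Arnold triangle relations, and one can extract its $S_d$-equivariant graded character from this presentation via a broken-circuit / NBC basis (Lehrer, Hanlon, Sundaram--Welker). The heart of the proof, and where I expect the real difficulty, is the formal matching step: showing that when the cohomological grading parameter is specialized to $1/q$ the resulting generating function factors into exactly $\prod_j(1 - p_j u^j)^{-I_j(q)}$. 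The guiding identity is $1 - qu = \prod_{j \geq 1}(1 - u^j)^{I_j(q)}$, which is itself equivalent to the necklace formula; I expect a plethystic exponential manipulation, combining the cycle-by-cycle factorization of the graded Frobenius characteristic of $H^\ast(\PConf_d(\RR^3),\QQ)$ with the Möbius structure of $I_j(q)$, to align the two sides.
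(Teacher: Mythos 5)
Your Steps 1 and 2 are correct and in fact reproduce the paper's own reduction: the per-partition identity $z_\lambda N_\lambda(q) = \sum_{k} \psi_d^k(\lambda)\,q^{d-k}$ is exactly Theorem \ref{thm split} (the splitting measure formula $\nu(\lambda) = \frac{1}{z_\lambda}\sum_k \psi_d^k(\lambda)q^{-k}$ rewritten with $\nu(\lambda)=N_\lambda(q)/q^d$), and the paper's proof of Theorem \ref{twisted gl} is precisely your linearity argument run in reverse (expand $E_d(P)$ over factorization types, substitute, swap sums). Your Euler product for the arithmetic side via unique factorization is also right and is how the full proof in \cite{Hyde1} proceeds.

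The problem is Step 3, which you correctly identify as the heart of the matter and then do not carry out. Everything of substance in the theorem is concentrated in the identity
\[
\sum_{d\geq 0} u^d\,\ch\!\Big(\sum_{k=0}^{d-1}\psi_d^k\,q^{d-k}\Big) \;=\; \prod_{j\geq 1}\big(1-p_j u^j\big)^{-M_j(q)},
\]
and this is not a ``formal matching step'' that follows from a plethystic manipulation of the guiding identity $1-qu=\prod_j(1-u^j)^{M_j(q)}$: that scalar identity is only the specialization $p_j\mapsto 1$ (the dimension count $\sum_k \dim H^{2k}\,q^{d-k}=q^d$, equivalently Theorem \ref{thm constraint} at the level of dimensions), and it carries no information about which irreducible $S_d$-representations sit in which cohomological degree. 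The equivariant product formula is a genuine theorem about the $S_d$-module structure of $H^\ast(\PConf_d(\RR^3),\QQ)$ --- it is the Hersh--Reiner cycle index formula for the higher Lie characters \cite[Thm.~2.7]{HershReiner}, which the paper cites as the essential input to Theorem \ref{thm split}, and whose proof requires real input (Poincar\'e--Birkhoff--Witt for the free Lie superalgebra, or the Sundaram--Welker/Lehrer--Solomon analysis of the Whitney homology of the partition lattice). As written, your argument proves only one side of the equation and asserts an expectation that the other side will agree; to close the gap you must either quote the Hersh--Reiner formula precisely and verify that your specialization of its grading variable yields $\prod_j(1-p_ju^j)^{-M_j(q)}$, or prove that product formula from the Orlik--Solomon/NBC presentation you mention, neither of which is done here.
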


Theorem \ref{twisted gl} shows that the coefficients of the expected value $E_d(P)$ are determined by the representation theoretic structure of $H^{\ast}(\PConf_d(\RR^3),\QQ)$ for any factorization statistic $P$. Note that the factorization statistic $P$ plays different roles on each side of this equation: on the left it acts as a function on the set $\poly_d(\FF_q)$ of polynomials; on the right it acts as a class function of the symmetric group $S_d$.

The sequence of representations $H^{2k}(\PConf_d(\RR^3),\QQ)$ has another interpretation as the \emph{higher Lie representations} $\mathrm{Lie}_k$ \cite[Sec. 2.6]{HershReiner}. We express Theorem \ref{twisted gl} in terms of the cohomology of point configurations in $\RR^3$ to parallel the following result of Church, Ellenberg, and Farb.

\begin{thm}[{\cite[Prop. 4.1]{CEF}}]
\label{twisted gl sf}
Let $P$ be a factorization statistic, and let $\phi_d^k$ be the character of the $S_d$-representation $H^k(\PConf_d(\RR^2), \QQ)$. If $\poly_d^\sfr(\FF_q)$ is the set of squarefree polynomials of degree $d$ in $\FF_q[x]$, then
\[
    \frac{1}{q^d}\sum_{f\in \poly_d^\sfr(\FF_q)}P(f) = \sum_{k=0}^{d-1}\frac{(-1)^k \langle P, \phi_d^k \rangle}{q^k}.
\]
\end{thm}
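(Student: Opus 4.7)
My plan is to deduce Theorem \ref{twisted gl sf} in parallel with Theorem \ref{twisted gl}, reusing the same formal power series framework with one structural modification: for squarefree polynomials each irreducible appears at most once, so the Euler-like product on the combinatorial side becomes $\prod_{j}(1 + x_j u^j)^{I_j(q)}$ rather than $\prod_{j}(1 - x_j u^j)^{-I_j(q)}$, and the alternating signs $(-1)^k$ on the cohomological side arise from this sign flip under the logarithm.

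First I would use $\QQ$-linearity of both sides in $P$ (viewed as a class function on $S_d$) to reduce to the case $P = \mathbf{1}_\lambda$, the indicator of a single factorization type $\lambda = (1^{m_1}2^{m_2}\cdots) \vdash d$. Unique factorization in $\FF_q[x]$ together with the necklace polynomial $I_j(q) = \frac{1}{j}\sum_{e\mid j}\mu(j/e)q^e$ counting monic irreducibles of degree $j$ gives
\[
    \frac{1}{q^d}\sum_{f\in\sfpoly_d(\FF_q)} \mathbf{1}_\lambda(f) = \frac{1}{q^d}\prod_{j\geq 1}\binom{I_j(q)}{m_j},
\]
because a squarefree polynomial of type $\lambda$ is an unordered selection of $m_j$ distinct monic irreducibles of each degree $j$. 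Since $\langle \mathbf{1}_\lambda, \phi_d^k\rangle = \phi_d^k(\sigma_\lambda)/z_\lambda$, Theorem \ref{twisted gl sf} collapses to the per-$\lambda$ identity $z_\lambda\prod_j \binom{I_j(q)}{m_j} = \sum_k(-1)^k\phi_d^k(\sigma_\lambda)\,q^{d-k}$.

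I would then encode both sides into a single generating series in $u$ and auxiliary variables $x_1,x_2,\ldots$ tracking the multiplicities $m_j$. The exponential formula collapses the polynomial side to the manifest Euler-like product
\[
    \sum_d u^d \sum_{\lambda \vdash d}\bigg(\prod_j \binom{I_j(q)}{m_j}\bigg)\prod_j x_j^{m_j} = \prod_{j\geq 1}(1 + x_j u^j)^{I_j(q)},
\]
and the task becomes to prove the matching cohomological identity
\[
    \sum_d u^d \sum_{\lambda \vdash d} \frac{1}{z_\lambda}\bigg(\sum_k(-1)^k \phi_d^k(\sigma_\lambda)\,q^{d-k}\bigg)\prod_j x_j^{m_j} = \prod_{j\geq 1}(1 + x_j u^j)^{I_j(q)}.
\]
The necklace polynomials appear on the right through M\"obius inversion applied to the logarithm of this product, mirroring the power-sum expansion of the character generating series on the left; matching $[u^d \prod_j x_j^{m_j}]$ coefficients then recovers the per-$\lambda$ identity of the previous paragraph.

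The main obstacle is establishing the last generating function identity purely by formal manipulations, without invoking the geometric structure of $H^*(\PConf_d(\RR^2),\QQ)$. I would mirror the strategy that establishes Theorem \ref{twisted gl}: express $\sum_k(-1)^k\phi_d^k\,q^{d-k}$ as the Frobenius characteristic of an explicit plethystic substitution and verify by induction on $d$ that this substitution satisfies the same recursion as the right-hand Euler-like product. The sign change between Theorem \ref{twisted gl} and Theorem \ref{twisted gl sf} corresponds precisely to replacing the complete symmetric function generating series $\prod_j(1- u x_j)^{-1}$ by the elementary symmetric function generating series $\prod_j(1 + u x_j)$, which is exactly the combinatorial shadow of passing from all monic polynomials to squarefree ones.
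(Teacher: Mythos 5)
Your reductions are sound: linearity in $P$, the identity $\langle \mathbf{1}_\lambda,\phi_d^k\rangle=\phi_d^k(\sigma_\lambda)/z_\lambda$, the count $\frac{1}{q^d}\prod_j\binom{I_j(q)}{m_j}$ for squarefree polynomials of type $\lambda$, and the repackaging of the combinatorial side as $\prod_{j}(1+x_ju^j)^{I_j(q)}$ are all correct, and they correctly identify what remains to be shown. But at that point the argument stops short of the actual content. The ``matching cohomological identity'' you write down is, after your own reductions, \emph{equivalent} to the theorem: it is precisely the squarefree analogue of Theorem \ref{thm split}, i.e.\ the statement that $\frac{1}{q^d}\prod_j\binom{I_j(q)}{m_j}=\frac{1}{z_\lambda}\sum_k\frac{(-1)^k\phi_d^k(\lambda)}{q^k}$ for every $\lambda\vdash d$. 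Your plan for proving it --- ``express $\sum_k(-1)^k\phi_d^k\,q^{d-k}$ as the Frobenius characteristic of an explicit plethystic substitution and verify by induction on $d$'' --- never specifies the substitution, never brings in any independent description of the $S_d$-representations $H^k(\PConf_d(\RR^2),\QQ)$ (their character values, their cycle index, or their decomposition), and never says what recursion the Euler product satisfies that would characterize it. Without some external input about this cohomology, no amount of formal manipulation can produce the identity, because on the left-hand side the symbols $\phi_d^k$ are otherwise unconstrained. This is exactly the role played in the paper's treatment of Theorem \ref{twisted gl} by the Hersh--Reiner product formula for the cycle index of $H^{\ast}(\PConf_d(\RR^3),\QQ)$, which is what makes Theorem \ref{thm split} provable; for the $\RR^2$ case you would need the corresponding known structure (e.g.\ the Lehrer--Solomon/partition-lattice description of $H^{\ast}(\PConf_d(\CC),\QQ)$ or its equivariant Poincar\'e polynomial), and your proposal neither names nor proves it.

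For comparison: the proof the paper actually cites for this statement (Church--Ellenberg--Farb) is algebro-geometric --- the Grothendieck--Lefschetz trace formula in \'etale cohomology with twisted coefficients, a purity statement for $\PConf_d$, and the comparison with singular cohomology --- so the identity between point counts and the characters $\phi_d^k$ is supplied by geometry rather than by a symmetric-function identity. The route you sketch is the author's advertised alternative (carried out in the reference \cite{Hyde1}), and your framing of it is consistent with that strategy; the gap is that the one step carrying all the weight, the cycle-index product formula for $H^{\ast}(\PConf_d(\RR^2),\QQ)$ that plays the role of Theorem \ref{thm split}, is asserted as a to-do rather than established.
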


The proof of Theorem \ref{twisted gl sf} in \cite{CEF} uses algebraic geometry: viewing $\PConf_d$ as a scheme defined over $\ZZ$, the Grothendieck-Lefschetz trace formula for \'{e}tale cohomology with ``twisted coefficients'' expresses the weighted point counts on $\Conf_d(\FF_q)$ in terms of the trace of Frobenius. This combined with a purity result and a comparison theorem between \'{e}tale and singular cohomology yields Theorem \ref{twisted gl sf}.

To see the connection between squarefree polynomials and point configurations in $\RR^2$, we view the plane as $\CC$ and note there is a natural correspondence between squarefree polynomials over $\CC$ of degree $d$ and unordered configurations of $d$ distinct points in $\CC$: 
\[
    (x - \alpha_1)(x - \alpha_2)\cdots(x - \alpha_d)\hspace{1em} \longleftrightarrow \hspace{1em} \{\alpha_1, \alpha_2, \ldots, \alpha_d\}
\]
This correspondence extends to an isomorphism of schemes $\poly_d^\sfr \cong \Conf_d$.

The geometric perspective behind their proof appears to break down in the case of Theorem \ref{twisted gl}, as there is no clear connection between general polynomials of degree $d$ and point configurations in $\RR^3$. Instead we prove Theorem \ref{twisted gl} using a generating function argument. Our approach also leads to a new proof of Theorem \ref{twisted gl sf}, circumventing the methods of algebraic geometry entirely.

We outline the strategy for Theorem \ref{twisted gl}. A complete proof appears in \cite{Hyde1}. Consider the \emph{splitting measure} $\nu$ defined on partitions $\lambda \vdash d$ by
\[
    \nu(\lambda) = \mathrm{Prob}(f\in \poly_d(\FF_q) \text{ has factorization type $\lambda$}).
\]
One may show that $\nu(\lambda)$ is a polynomial in $1/q$ with rational coefficients for any partition $\lambda$. The connection between $\poly_d(\FF_q)$ and $H^{\ast}(\PConf_d(\RR^3),\QQ)$ is made through the following result.

\begin{thm}[{\cite[Thm. 1.4]{Hyde1}}]
\label{thm split}
Let $\psi_d^k$ be the character of $H^{2k}(\PConf_d(\RR^3),\QQ)$. If $\lambda \vdash d$ is a partition, let $z_\lambda = \prod_{j\geq 1} j^{m_j} m_j!$ when $\lambda = (1^{m_1}2^{m_2}\cdots)$. Then
\[
    \nu(\lambda) = \frac{1}{z_\lambda}\sum_{k=0}^{d-1}\frac{\psi_d^k(\lambda)}{q^k}.
\]
\end{thm}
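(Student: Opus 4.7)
The plan is to show that both sides of the identity are coefficients of $t^d p_\lambda$ in one common symmetric-function-valued generating series
\[
    Z \;:=\; \prod_{j \geq 1} \bigl(1 - (t/q)^j p_j\bigr)^{-I_j},
\]
where $p_j$ is the $j$-th power sum and $I_j = \tfrac{1}{j}\sum_{e \mid j}\mu(j/e)q^e$ is the number of monic degree-$j$ irreducibles in $\FF_q[x]$. The theorem then splits into two independent generating-function identities, one on each side.

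\textbf{Splitting measure side.} By unique factorization in $\FF_q[x]$, every monic polynomial of degree $d$ is a multiset of monic irreducibles whose degrees sum to $d$. Weighting each $f$ by $t^{\deg f} p_{\text{type}(f)}$ and recording each monic irreducible $g$ through the geometric series $\sum_{n \geq 0} t^{n \deg g} p_{\deg g}^n$ gives
\[
    \sum_{d \geq 0} t^d \sum_{\lambda \vdash d} N(\lambda)\, p_\lambda \;=\; \prod_{j \geq 1} (1 - t^j p_j)^{-I_j},
\]
where $N(\lambda)$ counts monic degree-$d$ polynomials of factorization type $\lambda$. Substituting $t \mapsto t/q$ and using $\nu(\lambda) = N(\lambda)/q^d$ yields $\sum_d t^d \sum_\lambda \nu(\lambda)\, p_\lambda = Z$.

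\textbf{Cohomology side.} Using the Frobenius characteristic expansion $\ch H^{2k}(\PConf_d(\RR^3),\QQ) = \sum_\lambda \psi_d^k(\lambda)\, p_\lambda / z_\lambda$, the theorem reduces to the companion identity
\[
    Z \;=\; \sum_{d \geq 0} t^d \sum_{k \geq 0} q^{-k}\, \ch H^{2k}(\PConf_d(\RR^3),\QQ).
\]
I would prove this by exploiting the identification of $H^{2k}(\PConf_d(\RR^3),\QQ)$ with the higher Lie representation $\mathrm{Lie}_k$ (Hersh--Reiner), whose graded Frobenius characteristic admits an explicit plethystic description. Extracting the coefficient of each $q^e$ by M\"obius inversion and reassembling via the necklace-polynomial formula $I_j = \tfrac{1}{j}\sum_{e \mid j} \mu(j/e) q^e$ should recover precisely the Euler product $Z$. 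Matching coefficients of $t^d p_\lambda$ in the two expressions for $Z$ then yields $\nu(\lambda) = \tfrac{1}{z_\lambda}\sum_k \psi_d^k(\lambda)/q^k$, as claimed.

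\textbf{Main obstacle.} The splitting measure side is essentially bookkeeping built on unique factorization, and the final coefficient comparison is purely formal. The technical core lies in the cohomology side: showing that the graded Frobenius characteristic of $H^*(\PConf_d(\RR^3),\QQ)$ admits this particular Euler product expansion, with multiplicities $I_j$ that happen to coincide with the count of monic degree-$j$ irreducibles in $\FF_q[x]$. I expect this step to require either a careful plethystic manipulation of the higher Lie characteristic or a Lehrer--Solomon-style character formula for $\psi_d^k(\lambda)$ on each cycle type $\lambda$, combined with M\"obius inversion to introduce the necklace polynomials.
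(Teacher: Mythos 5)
Your proposal is correct and follows essentially the same route as the paper: the paper deduces Theorem \ref{thm split} by comparing the Euler product for the splitting-measure generating function (from unique factorization) with a product formula for the cycle index series of $H^{\ast}(\PConf_d(\RR^3),\QQ)$. The ``main obstacle'' you identify---the Euler product expansion of the graded Frobenius characteristic with necklace-polynomial exponents---is precisely the Hersh--Reiner product formula \cite[Thm.~2.7]{HershReiner} that the paper invokes, so that step is a known result rather than something you would need to prove from scratch.
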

Theorem \ref{thm split} is deduced with the help of a beautiful product formula for the cycle index series of the family $H^{\ast}(\PConf_d(\RR^3),\QQ)$ of representations which may be found in Hersh and Reiner \cite[Thm. 2.7]{HershReiner}. Once we have this result, Theorem \ref{twisted gl} follows by a change in the order of summation.

\begin{proof}[Proof of Theorem \ref{twisted gl}]
Since factorization statistics depend only on the factorization type of a polynomial, we may rewrite the expected value in terms of the splitting measure,
\[
    E_d(P) = \frac{1}{q^d}\sum_{f\in \poly_d(\FF_q)} P(f) = \sum_{\lambda \vdash d} P(\lambda)\nu(\lambda).
\]
Then Theorem \ref{thm split} implies,
\[
    \sum_{\lambda \vdash d} P(\lambda)\nu(\lambda)
    = \sum_{\lambda\vdash d}\frac{1}{z_\lambda}\sum_{k=0}^{d-1} \frac{P(\lambda)\psi_d^k(\lambda)}{q^k}
    = \sum_{k=0}^{d-1} \frac{1}{q^k}\left(\sum_{\lambda \vdash d} \frac{P(\lambda)\psi_d^k(\lambda)}{z_\lambda}\right)
    = \sum_{k=0}^{d-1} \frac{\langle P, \psi_d^k\rangle}{q^k}.
\]
\end{proof}

Church, Ellenberg, and Farb combine Theorem \ref{twisted gl sf} with the \emph{representation stability} of\\ $H^k(\PConf_d(\RR^2),\QQ)$ to deduce the asymptotic stability of squarefree factorization statistics. A sequence $V_d$ of $S_d$-representations is called representation stable when the decomposition of $V_d$ into irreducibles stabilizes as $d\rightarrow\infty$. We refer the reader to \cite{ChurchFarb} for a precise description of representation stability. For us the important fact is the following: Let $x_j$ be the function defined on partitions where $x_j(\lambda)$ is the number of parts of $\lambda$ os size $j$. Suppose $P$ is an element in $\QQ[x_1, x_2,\ldots]$, then $P$ defines a function on partitions called a \emph{character polynomial}. If $V_d$ is a representation stable sequence with character $\chi_d$ and $P$ is a character polynomial, then the following limit exists
\[
    \langle P, \chi\rangle = \lim_{d\rightarrow\infty}\langle P, \chi_d\rangle.
\]
Furthermore, the sequence $\langle P, \chi_d\rangle$ is eventually constant (see \cite[Sec. 3.4]{CEF}.) It follows from a general result of Church \cite[Thm. 1]{Church} that for each $k$ the sequence $H^{2k}(\PConf_d(\RR^3),\QQ)$ is representation stable. Thus we have the following corollary of Theorem \ref{twisted gl}.

\begin{cor}[Asymptotic stability of expected values]
\label{cor converge}
If $P$ is a factorization statistic given by a character polynomial, then the following limit converges coefficientwise in the ring of formal power series in $1/q$:
\[
    \lim_{d\rightarrow\infty}E_d(P) = \sum_{k=0}^\infty \frac{\langle P, \psi^k \rangle}{q^k},
\]
where $\langle P, \psi^k \rangle := \lim_{d\rightarrow\infty} \langle P, \psi_d^k \rangle$.
\end{cor}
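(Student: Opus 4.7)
The plan is to combine Theorem \ref{twisted gl} with the representation stability of the cohomology $H^{2k}(\PConf_d(\RR^3),\QQ)$. Theorem \ref{twisted gl} already presents $E_d(P)$ as a polynomial in $1/q$: for every $k \leq d-1$, the coefficient of $1/q^k$ in $E_d(P)$ is exactly $\langle P, \psi_d^k\rangle$, while the coefficients of $1/q^k$ for $k \geq d$ vanish. Coefficientwise convergence in $\QQ[[1/q]]$ is therefore equivalent to showing that, for each fixed $k \geq 0$, the numerical sequence $\langle P, \psi_d^k\rangle$ has a limit as $d\to\infty$; once this is established, the limit is the formal power series $\sum_{k\geq 0} \langle P, \psi^k\rangle/q^k$ by definition of $\langle P, \psi^k\rangle$.

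The content of the argument therefore reduces to the stability of these inner products. The first step is to invoke Church's theorem \cite[Thm. 1]{Church}, which asserts that for each fixed $k$ the sequence of $S_d$-representations $H^{2k}(\PConf_d(\RR^3),\QQ)$ is representation stable in the sense of Church--Farb. The second step is to combine this with the assumption that $P \in \QQ[x_1, x_2, \ldots]$ is a character polynomial: by the formal principle recalled in \cite[Sec.~3.4]{CEF}, pairing a fixed character polynomial against a representation stable sequence of characters yields a sequence of rational numbers that is \emph{eventually constant} in $d$. Applied to $P$ and $\psi_d^k$, this shows that $\langle P, \psi_d^k\rangle$ stabilizes to a value $\langle P, \psi^k\rangle$ for all sufficiently large $d$, which finishes the coefficientwise convergence.

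The main obstacle in this plan is not analytical but structural: everything hinges on verifying that $\{H^{2k}(\PConf_d(\RR^3),\QQ)\}_d$ fits the framework of representation stability so that Church's general theorem applies. Granting that input, together with the fact that factorization statistics given by character polynomials are precisely the inputs to which the eventual-constancy principle of \cite[Sec.~3.4]{CEF} applies, the corollary is a one-line consequence of Theorem \ref{twisted gl}. In other words, the deep content is imported from representation stability, and the role of Theorem \ref{twisted gl} is merely to identify the $k$-th coefficient of $E_d(P)$ with the relevant inner product so that coefficientwise convergence becomes visible.
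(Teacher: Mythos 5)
Your argument is correct and is exactly the one the paper intends: Theorem \ref{twisted gl} identifies the coefficient of $1/q^k$ in $E_d(P)$ with $\langle P, \psi_d^k\rangle$, and the combination of Church's representation stability result for $H^{2k}(\PConf_d(\RR^3),\QQ)$ with the eventual constancy of inner products of character polynomials against representation stable sequences (from \cite[Sec.~3.4]{CEF}) gives the coefficientwise limit. No gaps; this matches the paper's derivation.
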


\section{Examples}
\label{section example}

In this section we explore the interplay provided by Theorem \ref{twisted gl} between finite field combinatorics, representation theory, and topology through examples.

\subsection{Quadratic excess}
\label{sec quad}
Recall the quadratic excess factorization statistic $Q$ from Section \ref{section fact}: $Q(f)$ is defined as the difference between the number of reducible versus irreducible quadratic factors of $f$. Rephrasing this in terms of partitions we see that $Q$ is given by the character polynomial
\[
    Q(\lambda) = \binom{x_1(\lambda)}{2} - \binom{x_2(\lambda)}{1}.
\]
Let $\QQ[d]$ be the permutation representation of the symmetric group with basis $\{e_1, e_2, \ldots , e_d\}$ and consider the linear representation given by the second exterior power $\bigwedge^2 \QQ[d]$. This representation has dimension $\binom{d}{2}$ with basis given by $\{e_i \wedge e_j : i < j\}$. If $\sigma \in S_d$ is a permutation, then the trace of $\sigma$ on $\bigwedge^2 \QQ[d]$ is
\begin{align*}
    \mathrm{Trace}(\sigma) &= \#\{\{i,j\} : \sigma \text{ fixes $i$ and $j$}\} - \#\{\{i,j\} : \sigma \text{ transposes $i$ and $j$}\}\\
    &= \binom{x_1(\sigma)}{2} - \binom{x_2(\sigma)}{1}\\
    &= Q(\sigma).
\end{align*}
Thus $Q$, viewed as a class function of $S_d$, is the character of $\bigwedge^2\QQ[d]$. It follows that $\langle Q, \psi_d^k \rangle$ is a non-negative integer for all $d, k \geq 0$. This together with Theorem \ref{twisted gl} explains the non-negative integral coefficients of $E_d(Q)$. That the degree of $E_d(Q)$ is $d - 1$ reflects that $2(d - 1)$ is the largest non-vanishing degree of cohomology for $\PConf_d(\RR^3)$. The coefficientwise convergence of $E_d(Q)$ follows from Corollary \ref{cor converge}.

The coefficientwise convergence of $E_d(P)$ holds in much greater generality for functions $P$ defined on $\Conf_d(V)$ for $V$ an affine or projective variety defined over $\FF_q$ which only depend on the cycle structure of Frobenius, even when there is no apparent representation stability present; we do not pursue this further here but refer the reader to \cite[Cor. 10]{Chen}. One benefit of the combinatorial approach is that we can explicitly compute the limits of $E_d(P)$ as a rational function of $q$. For example,
\begin{align*}
    \lim_{d\rightarrow\infty}E_d(Q) &= \frac{1}{2}\bigg(1 + \frac{1}{q}\bigg)\bigg(\frac{1}{1 - \frac{1}{q}}\bigg)^2 - \frac{1}{2}\bigg(1 - \frac{1}{q}\bigg)\bigg(\frac{1}{1 - \frac{1}{q^2}}\bigg)\\
    &= \frac{2}{q} + \frac{2}{q^2} + \frac{4}{q^3} + \frac{4}{q^4} + \frac{6}{q^5} + \frac{6}{q^6} + \frac{8}{q^7} + \frac{8}{q^8} + \frac{10}{q^9} + \ldots
\end{align*}

\subsection{Constraint on total cohomology}
The next result gives a constraint on the total cohomology of $\PConf_d(\RR^3)$.
\begin{thm}
\label{thm constraint}
For each $d\geq 0$ there is an isomorphism of $S_d$-representations
\begin{equation}
\label{eqn cohom}
    \bigoplus_{k=0}^{d-1}H^{2k}(\PConf_d(\RR^3),\QQ) \cong \QQ[S_d],
\end{equation}
where $\QQ[S_d]$ is the regular representation of $S_d$.
\end{thm}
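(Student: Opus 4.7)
The plan is to compare characters by specializing the splitting measure at $q=1$. Write $\chi_d$ for the character of the left-hand side, so that $\chi_d(\lambda) = \sum_{k=0}^{d-1}\psi_d^k(\lambda)$ for any conjugacy class/cycle type $\lambda \vdash d$. By Theorem \ref{thm split},
\[
	\nu(\lambda) = \frac{1}{z_\lambda}\sum_{k=0}^{d-1}\frac{\psi_d^k(\lambda)}{q^k},
\]
which is a polynomial in $1/q$. Formally setting $1/q = 1$ (i.e.\ evaluating at $q=1$) gives $\nu(\lambda)\big|_{q=1} = \chi_d(\lambda)/z_\lambda$. Thus it suffices to show that $\nu(\lambda)\big|_{q=1} = [\lambda = (1^d)]$, since then $\chi_d$ vanishes off the identity class and takes value $z_{(1^d)} = d!$ on it, matching the character of $\QQ[S_d]$.

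To evaluate $\nu(\lambda)$ at $q=1$, I would use the explicit count via unique factorization and necklace polynomials. If $\lambda = (1^{m_1}2^{m_2}\cdots)$, then the number of monic degree $d$ polynomials in $\FF_q[x]$ of factorization type $\lambda$ is $\prod_{j \geq 1} \multi{N_j(q)}{m_j}$, where $N_j(q) = \tfrac{1}{j}\sum_{e \mid j}\mu(j/e)q^e$ is the necklace polynomial counting monic irreducibles of degree $j$. Specializing at $q=1$ gives $N_1(1) = 1$ and $N_j(1) = 0$ for $j \geq 2$, so $\multi{N_j(1)}{m_j} = 0$ whenever some $m_j \geq 1$ with $j \geq 2$, while for $\lambda = (1^d)$ the product equals $\multi{1}{d} = 1$. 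Dividing by $q^d\big|_{q=1} = 1$ yields $\nu(\lambda)\big|_{q=1} = [\lambda = (1^d)]$, exactly as required.

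Combining the two computations gives $\chi_d(\lambda) = z_\lambda \cdot [\lambda = (1^d)]$, which is the character of the regular representation of $S_d$. Since $\QQ$-representations of a finite group are determined up to isomorphism by their characters, this proves \eqref{eqn cohom}.

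The main conceptual step is the $q=1$ specialization: aside from Theorem \ref{thm split}, which does all the topological work, one only needs the well-known combinatorial fact that $N_j(1)$ vanishes for $j \geq 2$. There is no serious obstacle, but I would take some care to justify that both sides of Theorem \ref{thm split} are honest polynomials in $1/q$ so that the specialization is meaningful; this is immediate on the right and follows on the left from the necklace-polynomial count above.
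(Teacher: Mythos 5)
Your proposal is correct and follows essentially the same route as the paper: specialize the splitting measure formula of Theorem \ref{thm split} at $q=1$, compute $\nu(\lambda)\big|_{q=1}$ directly from the necklace-polynomial count of polynomials of each factorization type, and conclude that the total character is $d!$ on the identity class and $0$ elsewhere. The only cosmetic difference is your use of multiset-coefficient notation in place of the paper's $\binom{M_j(q)+m_j-1}{m_j}$.
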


\begin{proof}
Let $\rho$ be the character of $\bigoplus_{k=0}^{d-1}H^{2k}(\PConf_d(\RR^3),\QQ)$. Then
\[
    \rho = \sum_{k=0}^{d-1} \psi_d^k,
\]
where $\psi_d^k$ is the character of $H^{2k}(\PConf_d(\RR^3),\QQ)$. By Theorem \ref{thm split} we have
\[
    \nu(\lambda) = \frac{1}{z_\lambda}\sum_{k=0}^{d-1} \frac{\psi_d^k(\lambda)}{q^k},
\]
where $\nu$ is the splitting measure. Let $\nu_1$ denote the splitting measure evaluated at $q = 1$. Then $\nu_1(\lambda) = \frac{\rho(\lambda)}{z_\lambda}$. On the other hand we can compute $\nu_1(\lambda)$ directly. The number of irreducible polynomials in $\poly_d(\FF_q)$ is given by
\[
    M_d(q) = \frac{1}{d}\sum_{e\mid d}\mu(e)q^{d/e}.
\]
Hence by unique factorization in $\FF_q[x]$,
\[
    \nu(\lambda) = \frac{1}{q^d}\prod_{j\geq 1}\binom{M_j(q) + m_j - 1}{m_j}.
\]
Since $M_j(1) = 0$ for $j > 1$ and $M_1(1) = 1$ it follows that
\[
    \nu_1(\lambda) = \prod_{j\geq 1}\binom{M_j(1) + m_j - 1}{m_j} = \begin{cases} 1 & \lambda = [1^d]\\ 0 & \text{otherwise.} \end{cases}
\]
Since $z_{[1^d]} = d!$, $\nu_1(\lambda) = \frac{\rho(\lambda)}{z_\lambda}$ implies
\[
    \rho(\lambda) = \begin{cases} d! & \lambda = [1^d]\\ 0 & \text{otherwise,} \end{cases}
\]
which is the character of the regular representation.
\end{proof}

The right hand side of \eqref{thm constraint} is well-understood: the irreducible representations of $S_d$ are indexed by partitions $\lambda\vdash d$, each irreducible $\Sp_\lambda$ is a direct summand of $\QQ[S_d]$ with multiplicity $f_\lambda := \dim \Sp_\lambda$. Thus Theorem \ref{thm constraint} tells us that all the irreducible components of $\QQ[S_d]$ are distributed among the various degrees of cohomology on the left hand side of \eqref{eqn cohom}. Theorem \ref{twisted gl} implies that this filtration of the regular representation completely determines and is determined by the expected values of factorization statistics on $\poly_d(\FF_q)$. We use this information to locate some of the irreducible $S_d$-representations in the cohomology of $\PConf_d(\RR^3)$.

\subsubsection{Trivial representation} Let $\tr = \Sp_{[d]}$ be the trivial representation of $S_d$. Recall that the trivial representation $\tr$ is one dimensional with constant character equal to 1. By Theorem \ref{thm constraint} there is precisely one $k$ such that $\tr$ is a summand of $H^{2k}(\PConf_d(\RR^3),\QQ)$. Interpreting the character of $\tr$ as a factorization statistic we have $E_d(1) = 1$ and Theorem \ref{twisted gl} implies
\[
    1 = E_d(1) = \sum_{k=0}^{d-1}\frac{\langle 1, \psi_d^k\rangle}{q^k}.
\]
Comparing coefficients of $1/q^k$ we conclude that $\tr$ is a summand of $H^0(\PConf_d(\RR^3),\QQ)$. On the other hand, $\PConf_d(\RR^3)$ is path connected so the degree 0 cohomology is one dimensional. Thus
\[
    H^0(\PConf_d(\RR^3),\QQ) \cong \tr.
\]
Note that any factorization statistic $P$ is a class function of $S_d$ and the irreducible characters of $S_d$ form a $\QQ$-basis for the vector space of all class functions. Thus there are $a_\lambda\in \QQ$ such that
\[
    P = \sum_{\lambda\vdash d} a_\lambda \chi_\lambda,
\]
where $\chi_\lambda$ is the character of the irreducible representation $\Sp_\lambda$. In particular if $a_1 := a_{[d]}$ is the coefficient of the trivial character in this decomposition, then we have the following corollary.

\begin{cor}
If $P$ is any factorization statistic and $a_1$ is the coefficient of the trivial character in the canonical expression for $P$ as a linear combination of irreducible characters, then
\[
    a_1 = \lim_{q\rightarrow\infty} E_d(P).
\]
Hence $a_1 = 0$ if and only if the expected value of $P$ approaches 0 for large $q$.
\end{cor}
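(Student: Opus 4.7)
The plan is to read off the limit directly from the twisted Grothendieck--Lefschetz formula (Theorem \ref{twisted gl}) and then identify the surviving term with $a_1$ via orthonormality of irreducible characters.

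First I would apply Theorem \ref{twisted gl} to write
\[
    E_d(P) = \sum_{k=0}^{d-1} \frac{\langle P, \psi_d^k\rangle}{q^k}.
\]
Every summand with $k \geq 1$ tends to $0$ as $q \to \infty$, and the $k = 0$ summand is the constant $\langle P, \psi_d^0 \rangle$. Hence
\[
    \lim_{q \to \infty} E_d(P) = \langle P, \psi_d^0 \rangle.
\]

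Next I would identify $\psi_d^0$ with the trivial character. This was already established in the paragraph preceding the corollary: since $\PConf_d(\RR^3)$ is path connected, $H^0(\PConf_d(\RR^3), \QQ) \cong \tr$, so $\psi_d^0 = \chi_{[d]}$.

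Finally, writing $P = \sum_{\lambda \vdash d} a_\lambda \chi_\lambda$ as in the paragraph just before the corollary and using orthonormality of irreducible characters under the inner product $\langle \cdot, \cdot \rangle$, we get $\langle P, \chi_{[d]} \rangle = a_{[d]} = a_1$. Combining these steps yields $a_1 = \lim_{q \to \infty} E_d(P)$, and the final ``hence'' clause follows at once. There is no real obstacle here: the work has already been done in Theorem \ref{twisted gl} and in identifying $H^0$, and the corollary is essentially an observation extracting the constant term of the expansion of $E_d(P)$ in powers of $1/q$.
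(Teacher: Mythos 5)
Your proposal is correct and follows exactly the route the paper intends: extract the constant term of the $1/q$-expansion from Theorem \ref{twisted gl}, use the identification $H^0(\PConf_d(\RR^3),\QQ)\cong \tr$ established just before the corollary, and conclude by orthonormality of irreducible characters. No gaps.
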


Our table of values for $E_d(Q)$ with $Q$ the quadratic excess show that $\lim_{q\rightarrow\infty}E_d(Q) = 0$ for each $d$, hence the representation $\bigwedge^2\QQ[d]$ has no trivial component.

\subsubsection{Sign representation} The only other one dimensional irreducible representation of $S_d$ is the sign representation $\Sgn := \Sp_{[1^d]}$ whose character we write as $\sgn$. Viewing $\sgn$ as a factorization statistic Theorem \ref{twisted gl} implies
\[
    E_d(\sgn) = \frac{1}{q^k}
\]
for some $k>0$, but which value of $k$ is it?

\begin{thm}
\label{thm sgn}
For each $d\geq 0$,
\[
    E_d(\sgn) = \frac{1}{q^{\lfloor d/2 \rfloor}}.
\]
Hence $H^{2\lfloor d/2 \rfloor}(\PConf_d(\RR^3),\QQ)$ is the unique cohomological degree with a $\Sgn$ summand.
\end{thm}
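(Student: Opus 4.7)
The plan is to compute the generating function $A(t) := \sum_{d \geq 0} q^d E_d(\sgn)\, t^d$ in closed form and read off the individual coefficients, then feed the result into the twisted Grothendieck-Lefschetz formula to pin down the sign representation in cohomology.

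Concretely, the sign character applied to cycle type $\lambda = (1^{m_1} 2^{m_2} \cdots)$ equals $\sgn(\lambda) = \prod_j (-1)^{m_j(j-1)}$, while the proof of Theorem \ref{thm constraint} has already exhibited the splitting measure as the product $\nu(\lambda) = \frac{1}{q^d}\prod_j\binom{M_j(q)+m_j-1}{m_j}$. Writing $E_d(\sgn) = \sum_\lambda \sgn(\lambda)\nu(\lambda)$ and summing degree by degree via the negative binomial identity $\sum_m\binom{M+m-1}{m}x^m = (1-x)^{-M}$, the generating function factors as
\[
    A(t) = \prod_{j \text{ odd}}(1-t^j)^{-M_j(q)}\prod_{j \text{ even}}(1+t^j)^{-M_j(q)}.
\]

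The decisive observation is that the substitution $t \mapsto -t$ collapses both factors into a single product, giving $A(-t) = \prod_j (1+t^j)^{-M_j(q)}$, which is the reciprocal of the standard squarefree generating function and so equals $(1-qt)/(1-qt^2)$. The ratio $A(t)/A(-t)$ kills all even-index factors and equals $\prod_{j \text{ odd}}\bigl(\frac{1+t^j}{1-t^j}\bigr)^{M_j(q)}$; but that same ratio can be read off from $B(t) := \prod_j(1-t^j)^{-M_j(q)} = (1-qt)^{-1}$ and its image under $t\mapsto -t$, yielding $A(t)/A(-t) = B(t)/B(-t) = (1+qt)/(1-qt)$. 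Multiplying through gives $A(t) = (1+qt)/(1-qt^2)$, and the geometric expansion $(1+qt)/(1-qt^2) = \sum_d q^{\lceil d/2 \rceil}t^d$ yields $E_d(\sgn) = q^{-\lfloor d/2\rfloor}$.

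Plugging this value into Theorem \ref{twisted gl} and equating coefficients of $1/q^k$ forces $\langle \sgn, \psi_d^k\rangle = \delta_{k,\lfloor d/2\rfloor}$, which is the cohomological claim; the multiplicity one is also consistent with Theorem \ref{thm constraint}, which requires the sign representation to appear somewhere exactly once. The main obstacle is spotting the $t \mapsto -t$ symmetry that decouples the odd and even factors of $A(t)$: without it, the alternating signs $(-1)^{m_j(j-1)}$ make the partition sum considerably more awkward to evaluate, and one has no obvious way to compare the mixed product against the known total and squarefree Euler products.
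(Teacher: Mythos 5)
Your argument is correct, and it is a genuinely different route from the one the paper takes: the paper proves Theorem \ref{thm sgn} by invoking \emph{liminal reciprocity} (deferring the details to \cite{Hyde2}), which compares $E_d(\sgn)$ on $\poly_d(\FF_q)$ with limiting squarefree statistics in many variables, whereas you give a self-contained Euler-product computation. Each step of yours checks out: the product formula for $\nu(\lambda)$ and the negative binomial series give $A(t)=\prod_{j\,\mathrm{odd}}(1-t^j)^{-M_j}\prod_{j\,\mathrm{even}}(1+t^j)^{-M_j}$ since $\sgn(\lambda)=\prod_{j\,\mathrm{even}}(-1)^{m_j}$; the substitution $t\mapsto -t$ sends both types of factors to $(1+t^j)^{-M_j}$, so $A(-t)$ is the reciprocal of the squarefree zeta function $\prod_j(1+t^j)^{M_j}=(1-qt^2)/(1-qt)$; and the quotient trick $A(t)/A(-t)=B(t)/B(-t)=(1+qt)/(1-qt)$ with $B(t)=\prod_j(1-t^j)^{-M_j}=(1-qt)^{-1}$ legitimately isolates the odd part, yielding $A(t)=(1+qt)/(1-qt^2)=\sum_d q^{\lceil d/2\rceil}t^d$ and hence $E_d(\sgn)=q^{-\lfloor d/2\rfloor}$. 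Passing to the cohomological statement is then exactly as you say: since the identity holds for all prime powers $q$ and both sides are polynomials in $1/q$, Theorem \ref{twisted gl} forces $\langle \sgn,\psi_d^k\rangle=\delta_{k,\lfloor d/2\rfloor}$, and nonnegativity of multiplicities is not even needed once coefficients are matched. What your approach buys is elementarity and self-containment --- it uses only unique factorization in $\FF_q[x]$ and the two standard zeta/Euler products, with no appeal to the multivariable limit of \cite{Hyde2}; what the paper's route buys is a conceptual link (liminal reciprocity) that situates this computation in a broader family of identities relating $\poly_d$ to squarefree statistics. Your method would merit inclusion as an alternative proof.
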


We prove Theorem \ref{thm sgn} using \emph{liminal reciprocity} which relates factorization statistics in $\poly_d(\FF_q)$ with the limiting values of \emph{squarefree} factorization statistics for $\FF_q[x_1, x_2, \ldots, x_n]$ as the number of variables $n$ tends to infinity. See \cite{Hyde2} for details.

Theorem \ref{thm sgn} has a surprising consequence. Recall that $ET$ is the \emph{even type} factorization statistic defined as $ET(f) = 1$ when the factorization type of $f$ is an even partition and 0 otherwise. Thus the expected value $E_d(ET)$ is the probability of a random polynomial in $\poly_d(\FF_q)$ having even factorization type. One might guess that a polynomial should be just as likely to have an even versus odd factorization type. However, notice that as class functions of $S_d$ we have
\[
    ET = \tfrac{1}{2}(1 + \sgn).
\]
It follows by the linearity of expectation that
\[
    E_d(ET) = \tfrac{1}{2}(E_d(1) + E_d(\sgn)) = \tfrac{1}{2}\big(1 + \tfrac{1}{q^{\lfloor d/2 \rfloor}}\big).
\]
The leading term of this probability is $1/2$ as we expected, but there is a bias toward a polynomial having even factorization type coming from the sign representation and the degree of cohomology in which it appears. For comparison we remark that in the squarefree case the probability of a random polynomial in $\poly_d^\sfr(\FF_q)$ having even factorization type is exactly
\[
    E_d^\sfr(ET) = \tfrac{1}{2},
\]
matching our original guess.

\subsubsection{Standard representation}
Recall the factorization statistic $R$ from Example \ref{ex 1} where $R(f)$ is the number of $\FF_q$-roots of $f(x)$. In \cite{Hyde1} we use generating functions to compute the expected number of roots $E_d(R)$ of a degree $d$ polynomial to be
\begin{equation}
\label{eqn roots}
    E_d(R) = \frac{1 - \frac{1}{q^d}}{1 - \frac{1}{q}} = 1 + \frac{1}{q} + \frac{1}{q^2} + \frac{1}{q^3} + \ldots + \frac{1}{q^{d-1}}.
\end{equation}
Viewed as a class function of $S_d$, $R(\sigma)$ is the number of fixed points of $\sigma$. Hence $R$ is the character of the permutation representation $\QQ[d]$. It is well known that the irreducible decomposition of $\QQ[d]$ is
\[
    \QQ[d] \cong \tr \oplus \Std,
\]
where $\Std := \Sp_{[d-1,1]}$ is the \emph{standard representation} of $S_d$ of dimension $d - 1$. We already determined that $H^0(\PConf_d(\RR^3),\QQ)\cong \tr$, explaining the constant term in \eqref{eqn roots}. Thus Theorem \ref{twisted gl} implies that each $H^{2k}(\PConf_d(\RR^3),\QQ)$ has a single $\Std$ component for $1\leq k \leq d - 1$, accounting for all copies of $\Std$. For comparison we note that
\[
    E_d^\sfr(R) = \frac{1 - \frac{(-1)^{d-1}}{q^{d-1}}}{1 + \frac{1}{q}} = 1 - \frac{1}{q} + \frac{1}{q^2} - \frac{1}{q^3} + \ldots + (-1)^{d-1}\frac{1}{q^{d-2}}. 
\]

\subsubsection{Evaluating at $q = 1$}

Recall that the inner product $\langle \chi, \psi \rangle$ of symmetric group class functions is bilinear. If $P$ is any factorization statistic, then by Theorem \ref{twisted gl} we have the following evaluation of $E_d(P)$ at $q = 1$,
\[
    E_d(P)_{q = 1} = \sum_{k=0}^{d-1} \langle P, \psi_d^k\rangle = \langle P, \sum_{k=0}^{d-1} \psi_d^k \rangle.
\]
Passing to characters in Theorem \ref{thm constraint} gives
\[
    \sum_{k=0}^{d-1} \psi_d^k = \chi_{\mathrm{reg}},
\]
where $\chi_{\mathrm{reg}}$ is the character of the regular representation $\QQ[S_d]$. If $P$ is a character of an $S_d$-representation $V$, then it follows from the general representation theory of finite groups that $\langle P, \chi_{\mathrm{reg}} \rangle = \dim V$. Therefore,
\[
    E_d(P)_{q = 1} = \dim V.
\]
If $Q$ is the quadratic excess factorization statistic, then earlier we showed that $Q$ is the character of the $\binom{d}{2}$-dimensional representation $\bigwedge^2 \QQ[d]$. Hence
\[
    E_d(Q)_{q=1} = \binom{d}{2},
\]
which was observed in the table of values for $E_d(Q)$. We also showed that the root statistic $R$ was the character of the permutation representation $\QQ[d]$, hence
\[
    E_d(R)_{q=1} = d.
\]

\acknowledgements{The author would like to thank Jeff Lagarias for introducing him to splitting measures, Phil Tosteson for valuable references, and Jonathan Gerhard for the observation about specializing expected values at $q = 1$.}

\printbibliography

\end{document}